\DeclareMathOperator{\dup}{d\hspace{-1.5pt}}
\newcommand{\raisecomma}{\raisebox{2pt}{$,$}}
\newcommand{\raisedot}{\raisebox{2pt}{$.$}}
\newcommand{\rhat}{{\widehat{r}}}
\newcommand{\rtilde}{{\widetilde{r}}}
\newcommand{\Jtilde}{{\widetilde{J}}}
\newcommand{\Gammatilde}{{\widetilde{\Gamma}}}
\newcommand{\thetahat}{{\widehat{\theta}}}
\newcommand{\thetatilde}{{\widetilde{\theta}}}
\newcommand{\betahat}{{\widehat{\beta}}}
\newcommand{\betatilde}{{\widetilde{\beta}}}
\newcommand{\half}{{\textstyle\frac12}}
\newcommand{\C}{{\mathbb C}}
\newtheorem{corollary}{Corollary}
\newtheorem{lemma}{Lemma}
\newtheorem{remark}{Remark}
\begin{document}
\bibliographystyle{plain}
\title{~\\[-60pt]
Asymptotic approximation of central binomial coefficients with
rigorous error bounds
}
\author{Richard P.\ Brent\\[5pt]
Australian National University\\
Canberra, ACT 2600,
Australia
}

\date{}

\maketitle
\thispagestyle{empty}                   

\begin{abstract}
We show that a well-known asymptotic series for the logarithm of the central
binomial coefficient is strictly enveloping
in the sense of P\'olya and Szeg\"o,
so the error incurred in truncating the series is of the same sign as the
next term, and is bounded in magnitude by that term. We consider
closely related asymptotic series for Binet's function,
for $\ln\Gamma(z+\frac12)$,
and for the Riemann-Siegel theta function,
and make some historical remarks.\\

\noindent\textbf{Keywords:} 
Asymptotic series,
Binet function,
Binomial coefficient,
Central binomial coefficient, 
Gamma function, 
Riemann-Siegel theta function,
Stirling's approximation,
Strictly enveloping series.\\

\noindent\textbf{MSC:}
05A10; 
11B65; 
33B15; 
41A60. 
\end{abstract}
 
\pagebreak[3]

\section{Introduction}		\label{sec:intro}

Let $z\in\C$ and assume that $\Re z > 0$.
It is well-known that
\begin{equation}		\label{eq:Stirling}
\ln\Gamma(z) = (z-\half)\ln z - z + \half\ln(2\pi) + J(z),
\end{equation}
where $J(z)$ can be written as
\begin{equation}		\label{eq:Binet}
J(z) = \frac{1}{\pi}\int_0^\infty\frac{z}{\eta^2+z^2}
	\ln\left(\frac{1}{1-e^{-2\pi\eta}}\right)\,\dup\eta\,.
\end{equation}
The analytic function $J(z)$ is known as
\emph{Binet's function} and has several equivalent
expressions; see for example Henrici~\cite[(8.5-7)]{Henrici-v2}.

Binet's function has an asymptotic expansion
\begin{equation}		\label{eq:Binet-expansion}
J(z) \sim \frac{\beta_0}{z} - \frac{\beta_1}{z^3}
   + \frac{\beta_2}{z^5} - \cdots,
\end{equation}
or more precisely, for non-negative integers $k$,
\begin{equation}		\label{eq:Binet-expansion-remainder}
J(z) = \sum_{j=0}^{k-1} (-1)^j\frac{\beta_j}{z^{2j+1}} + r_k(z),
\end{equation}
where
\begin{equation}		\label{eq:beta}
\beta_k = \frac{1}{\pi}\int_0^\infty
        \eta^{2k}
        \ln\left(\frac{1}{1-e^{-2\pi\eta}}\right)\,\dup\eta
\end{equation}
and
\begin{equation}		\label{eq:Binet_rk}
r_k(z) = \frac{(-1)^k}{\pi z^{2k-1}}\int_0^\infty
	\frac{\eta^{2k}}{z^2+\eta^2}
	\ln\left(\frac{1}{1-e^{-2\pi\eta}}\right)\,\dup\eta\,.
\end{equation}
It may be shown that
\begin{equation}	\label{eq:betakzetaB}
\beta_k = \frac{2(2k)!}{(2\pi)^{2k+2}}\,\zeta(2k+2)
	= \frac{(-1)^k}{(2k+1)(2k+2)}\,B_{2k+2}\,,
\end{equation}
where $B_{2k+2}$ is a Bernoulli number ($B_2 = 1/6, B_4 = -1/30$, etc.).
Proofs of these results are given in
Henrici's book~\cite[\S11.1]{Henrici-v2}.\footnote{There
is an error in Henrici's equation (11.1-13):
$2^{-2\pi\eta}$ should be replaced by $e^{-2\pi\eta}$.}
As far as possible, we have followed Henrici's notation.

Substituting \eqref{eq:Binet-expansion-remainder} into
\eqref{eq:Stirling} gives an asymptotic expansion for $\ln\Gamma(z)$ that
is usually named after James Stirling,
although some credit is due to Abraham de Moivre.
For the history and early references, see Dutka~\cite{Dutka}.
It is interesting to note that de Moivre started (about 1721) 
by trying to approximate
the central binomial coefficient $\binom{2n}{n}$, not the factorial (or
Gamma) function~-- see Dutka~\cite[pg.~227]{Dutka}.

It is easy to see from~\eqref{eq:beta} and~\eqref{eq:Binet_rk} that
\begin{equation}		\label{eq:r_k_theta}
r_k(z) = \theta_k(z)\, (-1)^k\frac{\beta_k}{z^{2k+1}}\,\raisecomma
\end{equation}
where
\begin{equation}		\label{eq:theta}
\theta_k(z) = \int_0^\infty
	\frac{z^2\,\eta^{2k}}{z^2+\eta^2}
	\ln\left(\frac{1}{1-e^{-2\pi\eta}}\right)\,\dup\eta\
	\Bigg/ 
\int_0^\infty
        \eta^{2k}
        \ln\left(\frac{1}{1-e^{-2\pi\eta}}\right)\,\dup\eta\,.
\end{equation}
Suppose now that $z$ is real and positive.
Since $z^2/(z^2+\eta^2) \in (0,1)$ and the logarithmic factors
in~\eqref{eq:theta} are positive for all $\eta\in(0,\infty)$,
we see that
\begin{equation}			\label{eq:enveloping}
	\theta_k(z) \in (0,1).
\end{equation}
Thus, the remainder $r_k(z)$ given by~\eqref{eq:r_k_theta} has the same
sign as the next term $(-1)^k\beta_k/z^{2k+1}$
in the asymptotic series, and is smaller
in absolute value. In the terminology used
by P\'olya and Szeg\"o~\cite[Ch.~4]{PS-v1},
the asymptotic series
for $\ln\Gamma(z)$ \emph{strictly envelops}\footnote{We refer to the English
translation. In the original it is ``in engerem Sinne umh\"ullen''.} 
the function $\ln\Gamma(z)$.%
\footnote{When testing the enveloping property,
we only consider the nonzero terms in the asymptotic series.
See~\cite[Problem 142, footnote 1]{PS-v1}.}

Section \ref{sec:central} shows that we can deduce a strictly enveloping
asymptotic series for $\ln(\Gamma(2z+1)/\Gamma(z+1)^2)$ or equivalently, 
if $z=n$ is
a positive integer, for the logarithm of
the central binomial coefficient $\binom{2n}{n}$.
The series itself is well known,
but we have not found the
enveloping property or the resulting error bound
mentioned in the literature.  Henrici
was aware of it, since in his book~\cite[\S11.2, Problem~6]{Henrici-v2}
he gives the special case $k=3$ as an exercise, along with a hint
for the solution. Hence, we do not claim any particular originality.
Our purpose is primarily to make some useful asymptotic
series and their associated error bounds readily accessible.
Related results and additional references may be found,
for example, in~\cite{AR,Nemes13,Olver}.

In \S\ref{sec:central} we consider the central binomial coefficient and its
generalisation to a complex argument.  Then, in \S\ref{sec:other}, we
consider some closely related asymptotic series that we can prove to be
strictly enveloping. In~\S\ref{sec:non-env} we make some remarks on
asymptotic series that are \emph{not} enveloping.
An Appendix gives numerical values of the coefficients appearing
in three of the asymptotic series.

Finally,
we remark that it is possible to give asymptotic 
series related to $\Gamma(z+\frac12)/\Gamma(z)$
and $\binom{2n}{n}$, but in general these
series do not alternate in sign. 
See, for example,
\cite{Dyson}, \cite{Elezovic}, 
\cite[ex.~9.60 and pg.~602]{Graham},
\cite{KS}, 
and \cite{Lehmer}. 

\pagebreak[3]
\section{Asymptotic series for central binomial coefficients}
	\label{sec:central}

Define
\begin{align*}
\Gammatilde(z) :=& \frac{\Gamma(2z+1)}{\Gamma(z+1)^2}\,\raisecomma\\
\Jtilde(z) :=& J(2z) - 2J(z),\\
\rtilde_k(z) :=& r_k(2z) - 2r_k(z),
\end{align*}
and
\begin{equation}	\label{eq:betatilde}
\betatilde_k := (2-2^{-2k-1})\beta_k = 
  (-1)^{k}\frac{(1-4^{-k-1})}{(k+1)(2k+1)}B_{2k+2}\,.
\end{equation}
As noted above,
the central binomial coefficient $\binom{2n}{n}$ is simply
$\Gammatilde(n)$.

Using elementary properties of the Gamma function, we have
\begin{equation}			\label{eq:Gammatilde}
\Gammatilde(z) = \frac{2}{z} \frac{\Gamma(2z)}{\Gamma(z)^2}\,\raisedot
\end{equation}
Thus, from~\eqref{eq:Stirling} and the same equation with $z \mapsto 2z$,
we have
\begin{equation}			\label{eq:lnGammatildeJtilde}
\ln\Gammatilde(z) = \ln\left(\frac{4^z}{\sqrt{\pi z}}\right) + \Jtilde(z).
\end{equation}
Also, from~\eqref{eq:Binet-expansion-remainder}
and the definition of $\Jtilde(z)$, we have an asymptotic series
for $\Jtilde(z)$, namely:
\begin{equation}	\label{eq:Jtilde-series}
\Jtilde(z) = -\sum_{j=0}^{k-1} (-1)^j\frac{\betatilde_j}{z^{2j+1}} + 
	     \rtilde_k(z).
\end{equation}
Since $\binom{2n}{n} = \Gammatilde(n)$,
equations~\eqref{eq:lnGammatildeJtilde}--\eqref{eq:Jtilde-series} 
give an asymptotic series for $\ln\binom{2n}{n}$.
Lemma~\ref{lem:lemma1} shows that the
remainder $\rtilde_k(z)$ can be expressed as an integral analogous to
the integral~\eqref{eq:Binet_rk} for $r_k(z)$.
\begin{lemma}				\label{lem:lemma1}
For $z\in\C, \Re z > 0$, and $k$ a non-negative integer,
\begin{equation}			\label{eq:lemma1a}
\betatilde_k = - \frac{1}{\pi}\int_0^\infty
	\eta^{2k}
	\ln\tanh(\pi\eta)\dup\eta\,,
\end{equation}
\begin{equation}			\label{eq:lemma1b}
\rtilde_k(z) = \frac{(-1)^k}{\pi z^{2k-1}}\int_0^\infty
	\frac{\eta^{2k}}{z^2+\eta^2}
	\ln\tanh(\pi\eta)\dup\eta\,,
\end{equation}
and
\begin{equation}			\label{eq:Jr}
	\Jtilde(z) = \rtilde_0(z).
\end{equation}
\end{lemma}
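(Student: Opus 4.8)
\emph{Proof proposal.}
The plan is to obtain all three formulas directly from the integral representations~\eqref{eq:beta} and~\eqref{eq:Binet_rk} by feeding in the definitions $\betatilde_k=(2-2^{-2k-1})\beta_k$, $\rtilde_k(z)=r_k(2z)-2r_k(z)$ and $\Jtilde(z)=J(2z)-2J(z)$, and then collapsing the resulting combination of logarithms into a single factor $\ln\tanh(\pi\eta)$. Everything hinges on one elementary identity, which I would record first: since $\tanh(\pi\eta)=(1-e^{-2\pi\eta})/(1+e^{-2\pi\eta})$ and $1-e^{-4\pi\eta}=(1-e^{-2\pi\eta})(1+e^{-2\pi\eta})$, taking logarithms gives
\begin{equation*}
2\ln\!\left(\frac{1}{1-e^{-2\pi\eta}}\right)-\ln\!\left(\frac{1}{1-e^{-4\pi\eta}}\right)=-\ln\tanh(\pi\eta).
\end{equation*}
This is precisely the combination that emerges once the $e^{-4\pi\eta}$ integrals are produced by rescaling.

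For~\eqref{eq:lemma1a} I would split $\betatilde_k=2\beta_k-2^{-2k-1}\beta_k$ and observe, via the substitution $\eta\mapsto2\eta$ in~\eqref{eq:beta}, that $2^{-2k-1}\beta_k=\frac{1}{\pi}\int_0^\infty\eta^{2k}\ln(1/(1-e^{-4\pi\eta}))\,\dup\eta$; that is, the factor $2^{-2k-1}$ is exactly what turns $e^{-2\pi\eta}$ into $e^{-4\pi\eta}$ in the integrand. Subtracting and applying the displayed identity then yields $-\frac{1}{\pi}\int_0^\infty\eta^{2k}\ln\tanh(\pi\eta)\,\dup\eta$, as claimed.

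The same mechanism handles~\eqref{eq:lemma1b}. Applying $\eta\mapsto2\eta$ directly to the integral~\eqref{eq:Binet_rk} for $r_k(2z)$, the powers of $2$ coming from $\eta^{2k}$, from $(2z)^2+\eta^2=4(z^2+\eta^2)$, from the prefactor $(2z)^{2k-1}$ and from $\dup\eta$ cancel completely, leaving $r_k(2z)$ in the form of~\eqref{eq:Binet_rk} but with $z^2+\eta^2$ in the denominator and $\ln(1/(1-e^{-4\pi\eta}))$ in the integrand, carrying the same prefactor $(-1)^k/(\pi z^{2k-1})$ as $r_k(z)$. Forming $r_k(2z)-2r_k(z)$ places the bracket $\ln(1/(1-e^{-4\pi\eta}))-2\ln(1/(1-e^{-2\pi\eta}))=\ln\tanh(\pi\eta)$ under a single integral, which is~\eqref{eq:lemma1b}. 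Finally,~\eqref{eq:Jr} is immediate: setting $k=0$ in~\eqref{eq:Binet-expansion-remainder} empties the sum, so $J(z)=r_0(z)$ and hence $\Jtilde(z)=J(2z)-2J(z)=r_0(2z)-2r_0(z)=\rtilde_0(z)$.

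The only point demanding care---and the nearest thing to an obstacle---is confirming that the powers of $2$ introduced by the rescaling cancel to give exactly the claimed prefactors; this is bookkeeping rather than a genuine difficulty. I would also note in passing that every integral converges (the logarithmic singularity at $\eta=0$ is integrable and $\ln\tanh(\pi\eta)\to0$ exponentially as $\eta\to\infty$), so that the substitutions and term-by-term manipulations are all justified.
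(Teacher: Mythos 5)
Your proposal is correct and follows essentially the same route as the paper: the change of variables $\eta\mapsto 2\eta$ (with $z\mapsto 2z$) in the integral representations \eqref{eq:beta} and \eqref{eq:Binet_rk}, followed by collapsing $\ln\bigl(1/(1-e^{-4\pi\eta})\bigr)-2\ln\bigl(1/(1-e^{-2\pi\eta})\bigr)$ to $\ln\tanh(\pi\eta)$, with \eqref{eq:Jr} obtained from the empty-sum case $J(z)=r_0(z)$ of \eqref{eq:Binet-expansion-remainder}. You merely spell out the power-of-two bookkeeping and convergence remarks that the paper leaves implicit.
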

\begin{proof}
Making the change of variables $z \mapsto 2z$ and
$\eta \mapsto 2\eta$ in~\eqref{eq:Binet_rk}, we obtain
\[
r_k(2z) = \frac{(-1)^k}{\pi z^{2k-1}}\int_0^\infty
        \frac{\eta^{2k}}{z^2+\eta^2}
        \ln\left(\frac{1}{1-e^{-4\pi\eta}}\right)\,\dup\eta\,.
\]
Now
\begin{equation*}
\ln\left(\frac{1}{1-e^{-4\pi\eta}}\right) -
2\ln\left(\frac{1}{1-e^{-2\pi\eta}}\right)
= \ln\left(\frac{1-e^{-2\pi\eta}}{1+e^{-2\pi\eta}}\right)
= \ln\tanh(\pi\eta),
\end{equation*}
so~\eqref{eq:lemma1b}--\eqref{eq:Jr} follow from the
definitions of $\rtilde_k(z)$ and $\Jtilde(z)$.
The proof of~\eqref{eq:lemma1a} is similar.
\end{proof}
\pagebreak[3]

Corollary~\ref{cor:cor1} gives a result analogous
to equations~\eqref{eq:r_k_theta}--\eqref{eq:theta}.
\begin{corollary}			\label{cor:cor1}
For $z\in\C, \Re z > 0$, and $k$ a non-negative integer,
\begin{equation}			\label{eq:rtildek}
\rtilde_k(z) = \thetatilde_k(z)(-1)^{k+1}\frac{\betatilde_k}{z^{2k+1}}
	\,\raisecomma
\end{equation}
where
\begin{equation}			\label{eq:thetatilde}
\thetatilde_k(z) = \int_0^\infty \frac{z^2\,\eta^{2k}}{z^2+\eta^2}
	\ln\tanh(\pi\eta)\dup\eta
	\Bigg/
	\int_0^\infty \eta^{2k}
        \ln\tanh(\pi\eta)\dup\eta\,.
\end{equation}
\end{corollary}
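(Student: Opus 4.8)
The plan is to derive~\eqref{eq:rtildek} by the same algebraic rearrangement that produced equations~\eqref{eq:r_k_theta}--\eqref{eq:theta} in the introduction, now using the two integral representations supplied by Lemma~\ref{lem:lemma1}. First I would start from the formula~\eqref{eq:lemma1b} for $\rtilde_k(z)$ and rewrite the integrand by inserting a compensating factor of $z^2$:
\[
\frac{\eta^{2k}}{z^2+\eta^2} = \frac{1}{z^2}\,\frac{z^2\,\eta^{2k}}{z^2+\eta^2}\,\raisedot
\]
This turns the integral in~\eqref{eq:lemma1b} into $z^{-2}$ times the numerator integral appearing in the definition~\eqref{eq:thetatilde} of $\thetatilde_k(z)$, so that
\[
\rtilde_k(z) = \frac{(-1)^k}{\pi\,z^{2k+1}}\int_0^\infty
	\frac{z^2\,\eta^{2k}}{z^2+\eta^2}\ln\tanh(\pi\eta)\dup\eta\,\raisedot
\]

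Next I would introduce the denominator integral. Multiplying and dividing the remaining integral by $\int_0^\infty\eta^{2k}\ln\tanh(\pi\eta)\dup\eta$ replaces it by the ratio $\thetatilde_k(z)$ times that denominator; and by~\eqref{eq:lemma1a} the denominator equals exactly $-\pi\betatilde_k$. Substituting gives
\[
\rtilde_k(z) = \frac{(-1)^k}{\pi\,z^{2k+1}}\,\thetatilde_k(z)\,\bigl(-\pi\betatilde_k\bigr)
	= \thetatilde_k(z)\,(-1)^{k+1}\frac{\betatilde_k}{z^{2k+1}}\,\raisecomma
\]
which is precisely~\eqref{eq:rtildek}.

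The computation is entirely routine, so there is no real obstacle beyond careful bookkeeping. The one point that deserves attention is the sign: in contrast with the corresponding identity~\eqref{eq:r_k_theta} for $r_k(z)$, here an extra factor of $-1$ is generated by the minus sign in the integral representation~\eqref{eq:lemma1a} for $\betatilde_k$, which is exactly why the exponent of $-1$ in~\eqref{eq:rtildek} is $k+1$ rather than $k$. Everything hinges on Lemma~\ref{lem:lemma1} having already matched the numerator- and denominator-type integrals to $\rtilde_k(z)$ and $\betatilde_k$, so that once those two representations are in hand the corollary follows by inspection.
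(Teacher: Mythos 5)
Your proposal is correct and is exactly the argument the paper intends: the paper's proof of Corollary~\ref{cor:cor1} simply says it is straightforward from \eqref{eq:lemma1a}--\eqref{eq:lemma1b}, and your computation (factoring out $z^{2}$, identifying the denominator integral as $-\pi\betatilde_k$ via \eqref{eq:lemma1a}, and tracking the resulting sign change from $(-1)^k$ to $(-1)^{k+1}$) is precisely that routine verification, carried out carefully.
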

\begin{proof}
This is straightforward from equations~\eqref{eq:lemma1a}--\eqref{eq:lemma1b}
of Lemma~\ref{lem:lemma1}.
\end{proof}
Corollary~\ref{cor:cor2} gives a result analogous to
the bound~\eqref{eq:enveloping}.
\begin{corollary}		\label{cor:cor2}
If $z$ is real and positive, then $\thetatilde_k(z) \in (0,1)$.
\end{corollary}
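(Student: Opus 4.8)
The plan is to mimic the argument leading to the bound~\eqref{eq:enveloping}, but with careful attention to signs, since the logarithmic factor here behaves quite differently from the one in~\eqref{eq:theta}. First I would record the two sign facts that drive everything. For $z$ real and positive and $\eta\in(0,\infty)$ we have $z^2/(z^2+\eta^2)\in(0,1)$, exactly as before. The essential new point is that $\tanh(\pi\eta)\in(0,1)$ for $\eta>0$, so that $\ln\tanh(\pi\eta)<0$ throughout the range of integration; this is in contrast to~\eqref{eq:theta}, where the logarithmic factor was \emph{positive}. Thus in~\eqref{eq:thetatilde} both the numerator and the denominator are integrals of \emph{negative} integrands.

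Write $N$ and $D$ for the numerator and denominator appearing in~\eqref{eq:thetatilde}, so that $\thetatilde_k(z)=N/D$. Since $\eta^{2k}\ln\tanh(\pi\eta)<0$ for $\eta>0$, the denominator integrand is negative, giving $D<0$; in particular $D\neq 0$, so the ratio is well defined. (Convergence of the integral is unproblematic: $\ln\tanh(\pi\eta)$ decays exponentially as $\eta\to\infty$ and is only logarithmically singular, hence integrable against $\eta^{2k}$, as $\eta\to 0^+$.) The numerator integrand is the denominator integrand multiplied by the factor $z^2/(z^2+\eta^2)\in(0,1)$; multiplying a negative quantity by a factor in $(0,1)$ leaves it negative but strictly increases it, so pointwise $0>\bigl(z^2/(z^2+\eta^2)\bigr)\eta^{2k}\ln\tanh(\pi\eta)>\eta^{2k}\ln\tanh(\pi\eta)$. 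Integrating over $(0,\infty)$ preserves both inequalities, yielding $0>N>D$.

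Finally I would divide through by $D$. Because $D<0$, division reverses the inequalities, and $0>N>D$ becomes $0<N/D<1$, i.e.\ $\thetatilde_k(z)\in(0,1)$, as required. The only place where care is genuinely needed is the bookkeeping of signs: unlike in the derivation of~\eqref{eq:enveloping}, where the log factor and the denominator were positive, here the relevant integrals are negative, so the concluding division by the (negative) denominator flips the inequalities back into the desired form. Once that is tracked correctly, the argument is immediate.
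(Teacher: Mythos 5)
Your proof is correct and is essentially the paper's own argument: both rest on the fixed sign of $\ln\tanh(\pi\eta)$ on $(0,\infty)$ together with the fact that $z^2/(z^2+\eta^2)\in(0,1)$. The only (cosmetic) difference is that the paper rewrites the ratio~\eqref{eq:thetatilde} using $\ln\coth(\pi\eta)=-\ln\tanh(\pi\eta)>0$ so that both integrands become positive, whereas you keep the negative integrands and track the sign flip when dividing by the negative denominator.
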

\begin{proof}
We write~\eqref{eq:thetatilde} as
\begin{equation}                        \label{eq:thetatilde2}
\thetatilde_k(z) = \frac{\displaystyle
	\int_0^\infty \frac{z^2\,\eta^{2k}}{z^2+\eta^2}
        \ln\coth(\pi\eta)\dup\eta}
        {\displaystyle
        \int_0^\infty \eta^{2k}
        \ln\coth(\pi\eta)\dup\eta}\,\raisedot
\end{equation}
Observe that
$\coth(y) = \cosh(y)/\sinh(y) > 1$ for $y\in(0,\infty)$,
so $\ln\coth(y) > 0$ for $y = {\pi\eta} > 0$.
Since $z^2/(z^2+\eta^2) \in (0,1)$ for real positive $z$ and $\eta$,
the result follows.
\end{proof}
\begin{corollary}			\label{cor:cor3}
If $z$ is real and positive, then the asymptotic
series~\eqref{eq:Jtilde-series} for $\Jtilde(z)$ is strictly enveloping.
\end{corollary}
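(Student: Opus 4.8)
The plan is to combine the two preceding corollaries, which together supply exactly the two conditions defining strict envelopment. Recall from the Introduction that the series~\eqref{eq:Jtilde-series} strictly envelops $\Jtilde(z)$ provided that, for each $k$, the remainder $\rtilde_k(z)$ has the same sign as the next nonzero term $(-1)^{k+1}\betatilde_k/z^{2k+1}$ and is strictly smaller in absolute value.

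First I would quote Corollary~\ref{cor:cor1}, which by~\eqref{eq:rtildek} expresses the remainder as
\[
\rtilde_k(z) = \thetatilde_k(z)\,(-1)^{k+1}\frac{\betatilde_k}{z^{2k+1}},
\]
that is, as the next term multiplied by the scalar factor $\thetatilde_k(z)$. I would then apply Corollary~\ref{cor:cor2}, giving $\thetatilde_k(z)\in(0,1)$ for real positive $z$. Positivity of $\thetatilde_k(z)$ forces $\rtilde_k(z)$ to agree in sign with the next term, while $\thetatilde_k(z)<1$ forces $|\rtilde_k(z)|$ to be strictly smaller than $|\betatilde_k/z^{2k+1}|$. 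These are precisely the defining properties of a strictly enveloping series.

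A small point to verify is that no term of the series vanishes, so that the convention of testing envelopment only on the nonzero terms does not leave a gap. This is immediate from~\eqref{eq:betatilde}: the even-indexed Bernoulli numbers $B_{2k+2}$ are all nonzero and the rational prefactor $(1-4^{-k-1})/\bigl((k+1)(2k+1)\bigr)$ is strictly positive, so $\betatilde_k\neq0$ for every $k\ge0$ and the sign comparison above is always meaningful.

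I do not anticipate any genuine obstacle here: all of the analytic content lies in the integral representation of Lemma~\ref{lem:lemma1} and in the bound $\thetatilde_k(z)\in(0,1)$ of Corollary~\ref{cor:cor2}. The present corollary merely re-expresses those facts in the vocabulary of enveloping series, so the argument reduces to citing~\eqref{eq:rtildek} together with Corollary~\ref{cor:cor2}.
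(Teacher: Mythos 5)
Your proposal is correct and follows exactly the route the paper intends: the paper states Corollary~\ref{cor:cor3} without a separate proof precisely because it is the immediate combination of~\eqref{eq:rtildek} from Corollary~\ref{cor:cor1} with the bound $\thetatilde_k(z)\in(0,1)$ of Corollary~\ref{cor:cor2}, mirroring the argument given for $\ln\Gamma(z)$ in the Introduction via~\eqref{eq:r_k_theta}--\eqref{eq:enveloping}. Your additional check that $\betatilde_k\neq 0$ (so the sign comparison is never vacuous) is a careful touch consistent with the paper's footnote on testing only nonzero terms.
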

\begin{proof}
This is immediate from Corollary~\ref{cor:cor2}.
\end{proof}
\begin{remark}
{\rm 
We may compare
Corollary~\ref{cor:cor2} with (the proof of)
Lemma~2.7 of~\cite{rpb262}.
The latter, after allowing for different notation, 
gives the bound
\[
\frac{-1}{4^{k+1}-1} < \thetatilde_k(z) < \frac{4^{k+1}}{4^{k+1}-1}.
\]
This is clearly weaker than the bound of Corollary~\ref{cor:cor2},
and not sufficient to prove Corollary~\ref{cor:cor3}.
} 
\end{remark}

\pagebreak[3]
\section{Some related asymptotic series}	\label{sec:other}

\begin{lemma}				\label{lem:Gamma_ratio}
If $z\in\C, \Re z > 0$, then
\[
\Jtilde(z) = \ln\left(\frac{\Gamma(z+\frac12)}{z^{1/2}\Gamma(z)}\right)
	\raisedot
\]
\end{lemma}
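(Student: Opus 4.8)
The plan is to reduce the identity to Legendre's duplication formula for the Gamma function by unwinding the definition $\Jtilde(z) = J(2z) - 2J(z)$. First I would solve~\eqref{eq:Stirling} for $J$, obtaining
\[
J(w) = \ln\Gamma(w) - (w - \half)\ln w + w - \half\ln(2\pi),
\]
and substitute $w = 2z$ and $w = z$ to form $J(2z) - 2J(z)$. The terms linear in $z$ cancel outright, the constants combine to $\half\ln(2\pi)$, and collecting the logarithmic contributions (splitting $\ln(2z) = \ln 2 + \ln z$) leaves
\[
\Jtilde(z) = \ln\Gamma(2z) - 2\ln\Gamma(z) - \half\ln z - (2z - \half)\ln 2 + \half\ln(2\pi).
\]

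Next I would eliminate $\Gamma(2z)$ using the duplication formula $\Gamma(2z) = (2^{2z-1}/\sqrt{\pi})\,\Gamma(z)\,\Gamma(z+\half)$, whose logarithm replaces $\ln\Gamma(2z)$ by $(2z-1)\ln 2 - \half\ln\pi + \ln\Gamma(z) + \ln\Gamma(z+\half)$. After this substitution the two copies of $\ln\Gamma(z)$ combine with $-2\ln\Gamma(z)$ to give $-\ln\Gamma(z)$; the $\ln 2$ terms $(2z-1) - (2z-\half)$ leave a residual $-\half\ln 2$, which is exactly cancelled by $-\half\ln\pi + \half\ln(2\pi) = \half\ln 2$. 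What survives is
\[
\Jtilde(z) = \ln\Gamma(z+\half) - \ln\Gamma(z) - \half\ln z
= \ln\left(\frac{\Gamma(z+\half)}{z^{1/2}\Gamma(z)}\right),
\]
which is the claimed identity.

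The one genuine subtlety, and the step I expect to demand the most care, is the choice of branch for the various logarithms and for $z^{1/2}$, since the cancellations above are only formally valid as written. To make them rigorous I would first verify the identity for real $z > 1$, where Stirling's formula, the duplication formula, and every logarithm are real and unambiguous, so the chain of cancellations is literally an equality of real numbers. I would then note that both sides are single-valued and analytic on the connected slit plane $\C\backslash(-\infty,1]$: the left side because, by~\eqref{eq:Binet}, $J$ is analytic off its cut and the substitution $z\mapsto 2z$ keeps $2z$ off the cut for $z\in\C\backslash(-\infty,1]$; the right side because $\Gamma(z+\half)$ and $\Gamma(z)$ have no poles there and $z^{1/2}$ admits a single-valued principal branch. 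Since the two analytic functions agree on the ray $(1,\infty)$, which has accumulation points in the domain, the identity theorem extends the equality to all of $\C\backslash(-\infty,1]$, completing the argument.
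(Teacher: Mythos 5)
Your proof is correct and takes essentially the same route as the paper: the paper's one-line proof combines the duplication formula with equations~\eqref{eq:Gammatilde}--\eqref{eq:lnGammatildeJtilde}, and~\eqref{eq:lnGammatildeJtilde} encapsulates exactly the Stirling-formula cancellation $J(2z)-2J(z)$ that you carry out explicitly. Your concluding identity-theorem argument for the branches of the logarithms and of $z^{1/2}$ is extra care on the same path, handling a point the paper leaves implicit.
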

\begin{proof}
This follows from equations~\eqref{eq:Gammatilde}--\eqref{eq:lnGammatildeJtilde}
and the duplication formula 
$\Gamma(z)\Gamma(z+\half) = 2^{1-2z}\pi^{1/2}\Gamma(2z)$.
\end{proof}
{From} Lemma~\ref{lem:Gamma_ratio} and~\eqref{eq:Jtilde-series} we immediately
obtain an asymptotic expansion
\begin{equation}		\label{eq:logGrat}
\ln\left(\frac{\Gamma(z+\frac12)}{\Gamma(z)}\right)
	\sim \frac{\ln z}{2} + \sum_{j \ge 0}(-1)^{j+1}
	\frac{\betatilde_j}{z^{2j+1}}
\end{equation}
which is strictly enveloping if $z$ is real and positive.

Define 
\begin{equation}		\label{eq:betahat}
\betahat_j = \betatilde_j - \beta_j = (1-2^{-2j-1})\beta_j.
\end{equation}
Using the asymptotic expansion for $\ln\Gamma(z)$ given by equations
\eqref{eq:Stirling} and \eqref{eq:Binet-expansion}, we see 
from~\eqref{eq:logGrat} that $\ln\Gamma(z+\frac12)$ has an asymptotic
expansion
\begin{equation}		\label{eq:lnGammahalf}
\ln\Gamma(z+\half) \sim z\ln z - z + 
	\half\ln(2\pi)
	+ \sum_{j \ge 0}(-1)^{j+1}\,\frac{\betahat_j}{z^{2j+1}}
	\,\raisedot
\end{equation}
In fact, the expansion~\eqref{eq:lnGammahalf} was already obtained
by Gauss~\cite[Eqn.~{[}59{]} of
Art.~29]{Gauss-v3} 
in 1812.
However, Gauss did not explicitly bound the truncation error.
Writing~\eqref{eq:lnGammahalf} as
\begin{equation}		\label{eq:lnGammahalf2}
\ln\Gamma(z+\half) = z\ln z - z + 
	\half\ln(2\pi)
	+ \sum_{j=0}^{k-1}(-1)^{j+1}\,\frac{\betahat_j}{z^{2j+1}}
	+ \rhat_k(z)
	\,\raisecomma
\end{equation}
we have an unsurprising result for the truncation error $\rhat_k(z)$:
the error is of the same sign as the first neglected term
$(-1)^{k+1}\betahat_k/z^{2k+1}$,
and is bounded in magnitude by this term. This is shown in
Lemma~\ref{lem:lemma3} and Corollaries~\ref{cor:cor4}--\ref{cor:cor5}
below.

\begin{lemma}				\label{lem:lemma3}
For $z\in\C, \Re z > 0$, and $k$ a non-negative integer,
\begin{equation}			\label{eq:lemma3a}
\betahat_k = \frac{1}{\pi}\int_0^\infty
	\eta^{2k}
	\ln\left(1+e^{-2\pi\eta}\right)\dup\eta
\end{equation}
and
\begin{equation}			\label{eq:lemma3b}
\rhat_k(z) = \frac{(-1)^{k+1}}{\pi z^{2k-1}}\int_0^\infty
	\frac{\eta^{2k}}{z^2+\eta^2}
	\ln\left(1+e^{-2\pi\eta}\right)\dup\eta\,.
\end{equation}
\end{lemma}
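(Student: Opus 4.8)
The plan is to reduce both identities to the integral representations already in hand—\eqref{eq:beta} and \eqref{eq:Binet_rk} for $\beta_k,r_k$, and Lemma~\ref{lem:lemma1} for $\betatilde_k,\rtilde_k$—by exploiting the fact that the hatted quantities are differences (respectively sums) of the tilded and unhatted ones. The definition $\betahat_k = \betatilde_k - \beta_k$ in~\eqref{eq:betahat} is the model; the one preparatory step is to establish the parallel relation for the remainders, namely $\rhat_k(z) = \rtilde_k(z) + r_k(z)$.

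To obtain this remainder identity I would start from Lemma~\ref{lem:Gamma_ratio}, which gives $\ln\Gamma(z+\half) = \Jtilde(z) + \half\ln z + \ln\Gamma(z)$. Substituting the Stirling form~\eqref{eq:Stirling} together with~\eqref{eq:Binet-expansion-remainder} for $\ln\Gamma(z)$, and the expansion~\eqref{eq:Jtilde-series} for $\Jtilde(z)$, the term $\half\ln z$ combines with $(z-\half)\ln z$ to produce $z\ln z$, so the non-series part becomes exactly $z\ln z - z + \half\ln(2\pi)$ as in~\eqref{eq:lnGammahalf2}. The two finite sums merge termwise via $(-1)^{j+1}\betatilde_j + (-1)^j\beta_j = (-1)^{j+1}\betahat_j$, reproducing the coefficients in~\eqref{eq:lnGammahalf2} and leaving $\rtilde_k(z) + r_k(z)$ as the combined remainder. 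Comparing with the definition of $\rhat_k(z)$ supplied by~\eqref{eq:lnGammahalf2} then yields $\rhat_k(z) = \rtilde_k(z) + r_k(z)$.

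With the additive identities in place, both formulas follow by merging integrands. For~\eqref{eq:lemma3a} I would subtract~\eqref{eq:beta} from~\eqref{eq:lemma1a}, writing the integrand as $\ln\coth(\pi\eta) + \ln(1-e^{-2\pi\eta})$ (using $-\ln\tanh = \ln\coth$); since $\coth(\pi\eta) = (1+e^{-2\pi\eta})/(1-e^{-2\pi\eta})$, the bracket collapses to $\ln(1+e^{-2\pi\eta})$. For~\eqref{eq:lemma3b} I would add~\eqref{eq:Binet_rk} and~\eqref{eq:lemma1b}, which share the common prefactor $(-1)^k/(\pi z^{2k-1})$ and the kernel $\eta^{2k}/(z^2+\eta^2)$; the combined log factor $\ln\tanh(\pi\eta) - \ln(1-e^{-2\pi\eta})$ simplifies, via $\tanh(\pi\eta) = (1-e^{-2\pi\eta})/(1+e^{-2\pi\eta})$, to $-\ln(1+e^{-2\pi\eta})$, and this extra sign turns the prefactor $(-1)^k$ into $(-1)^{k+1}$.

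The computation is elementary throughout, so there is no serious obstacle; the only real hazard is sign bookkeeping—specifically, confirming the parity of $(-1)^j$ when the two finite sums are merged, since a slip there would flip the sign of the final remainder integral. As a sanity check I would inspect the case $k=0$, where $\rhat_0(z) = \Jtilde(z) + J(z)$ and~\eqref{eq:lemma3b} predicts a strictly negative value for real positive $z$, consistent with the leading term $-\betahat_0/z$ and with $\betahat_0 = \tfrac{1}{24} > 0$.
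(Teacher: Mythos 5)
Your proof is correct and takes essentially the same route as the paper, whose entire proof of Lemma~\ref{lem:lemma3} is that it is ``similar to the proof of Lemma~\ref{lem:lemma1}''~--- that is, combine the known integral representations \eqref{eq:beta}, \eqref{eq:Binet_rk}, \eqref{eq:lemma1a}, \eqref{eq:lemma1b} using $\ln\tanh(\pi\eta)-\ln\left(1-e^{-2\pi\eta}\right)=-\ln\left(1+e^{-2\pi\eta}\right)$, exactly as you do. Your explicit derivation of $\rhat_k(z)=\rtilde_k(z)+r_k(z)$ from Lemma~\ref{lem:Gamma_ratio} and the termwise identity $(-1)^{j+1}\betatilde_j+(-1)^j\beta_j=(-1)^{j+1}\betahat_j$ correctly supplies the step the paper leaves implicit in defining $\rhat_k$ via~\eqref{eq:lnGammahalf2}.
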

\begin{proof}
This is similar to the proof of Lemma~\ref{lem:lemma1}.
\end{proof}
\begin{corollary}			\label{cor:cor4}
For $z\in\C, \Re z > 0$, and $k$ a non-negative integer,
\begin{equation}			\label{eq:rhatk}
\rhat_k(z) = \thetahat_k(z)(-1)^{k+1}\frac{\betahat_k}{z^{2k+1}}\,\raisecomma
\end{equation}
where
\begin{equation}			\label{eq:thetahat}
\thetahat_k(z) = \int_0^\infty \frac{z^2\,\eta^{2k}}{z^2+\eta^2}
	\ln\left(1+e^{-2\pi\eta}\right)\dup\eta
	\Bigg/
	\int_0^\infty \eta^{2k}
        \ln\left(1+e^{-2\pi\eta}\right)\dup\eta\,.
\end{equation}
\end{corollary}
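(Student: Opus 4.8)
The plan is to mirror the proof of Corollary~\ref{cor:cor1} exactly: the identity~\eqref{eq:rhatk} is just the pair of integral formulas from Lemma~\ref{lem:lemma3}, rearranged so that $\rhat_k(z)$ appears as a prefactor times a normalised ratio $\thetahat_k(z)$. The whole argument is therefore a substitution and a cancellation, with no analytic content beyond what Lemma~\ref{lem:lemma3} already supplies.

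First I would start from the proposed right-hand side and insert the two integrals. By~\eqref{eq:lemma3a} the quantity $\betahat_k$ equals $\frac{1}{\pi}\int_0^\infty \eta^{2k}\ln(1+e^{-2\pi\eta})\dup\eta$, which is $1/\pi$ times the denominator of $\thetahat_k(z)$ in~\eqref{eq:thetahat}; in particular this denominator is positive, so $\thetahat_k(z)$ is well defined. The key step is the cancellation of that common denominator, after which
\[
\thetahat_k(z)\,(-1)^{k+1}\frac{\betahat_k}{z^{2k+1}}
= \frac{(-1)^{k+1}}{\pi z^{2k+1}}\int_0^\infty
\frac{z^2\,\eta^{2k}}{z^2+\eta^2}\ln\left(1+e^{-2\pi\eta}\right)\dup\eta\,.
\]
Pulling the factor $z^2$ out of the integrand and combining it with $z^{-(2k+1)}$ to give $z^{-(2k-1)}$ then reproduces the formula~\eqref{eq:lemma3b} for $\rhat_k(z)$, which completes the verification.

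I do not anticipate any real obstacle. No convergence or sign questions arise, because the integrals are precisely those established in Lemma~\ref{lem:lemma3}; the only point to check is that the denominator of $\thetahat_k$ is $\pi\betahat_k$, which is immediate from~\eqref{eq:lemma3a}. Structurally this is the same bookkeeping as Corollary~\ref{cor:cor1}, with $\ln\tanh(\pi\eta)$ replaced throughout by $\ln(1+e^{-2\pi\eta})$.
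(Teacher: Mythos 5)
Your proposal is correct and takes the same route as the paper: the paper's proof is simply ``This is a straightforward consequence of Lemma~\ref{lem:lemma3}'', and your explicit calculation---identifying the denominator of $\thetahat_k(z)$ in~\eqref{eq:thetahat} as $\pi\betahat_k$ via~\eqref{eq:lemma3a}, cancelling it, and recombining $z^2/z^{2k+1}$ to recover~\eqref{eq:lemma3b}---is precisely the bookkeeping the paper leaves to the reader, mirroring Corollary~\ref{cor:cor1}.
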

\begin{proof}
This is a straightforward consequence of Lemma~\ref{lem:lemma3}.
\end{proof}
\begin{corollary}			\label{cor:cor5}
If $z$ is real and positive, then the asymptotic expansion
for $\ln\Gamma(z+\half)$ given in~\eqref{eq:lnGammahalf2} is
strictly enveloping.
\end{corollary}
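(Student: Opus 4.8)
The plan is to follow exactly the template that established Corollary~\ref{cor:cor2}, since the structure here is identical: an asymptotic remainder has already been written as the next term multiplied by a ratio of two integrals, and the whole matter reduces to showing that this ratio lies in $(0,1)$. Concretely, by Corollary~\ref{cor:cor4} the remainder satisfies $\rhat_k(z) = \thetahat_k(z)(-1)^{k+1}\betahat_k/z^{2k+1}$, so the series~\eqref{eq:lnGammahalf2} is strictly enveloping precisely when $\thetahat_k(z)\in(0,1)$ for every real positive $z$ and every non-negative integer $k$. Thus the only thing I would actually prove is $\thetahat_k(z)\in(0,1)$.

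To do this I would inspect the two integrals defining $\thetahat_k(z)$ in~\eqref{eq:thetahat}. First I would observe that the logarithmic factor is strictly positive: since $e^{-2\pi\eta}>0$ for $\eta\in(0,\infty)$, we have $1+e^{-2\pi\eta}>1$, hence $\ln(1+e^{-2\pi\eta})>0$ throughout the range of integration. This is the one feature that differs from the $\Jtilde$ case of Corollary~\ref{cor:cor2}, where $\ln\tanh(\pi\eta)$ is negative and had to be rewritten as $-\ln\coth(\pi\eta)$; here no such rewriting is needed, so the argument is in fact slightly simpler.

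Next I would use the elementary bound $z^2/(z^2+\eta^2)\in(0,1)$, valid for real positive $z$ and $\eta\in(0,\infty)$. Combined with the positivity of the logarithmic factor, this shows that both the numerator and the denominator in~\eqref{eq:thetahat} have strictly positive integrands, so both integrals are positive; moreover the numerator's integrand is the denominator's integrand scaled by a factor strictly between $0$ and $1$, so the numerator is strictly smaller than the denominator. Hence $\thetahat_k(z)\in(0,1)$, which gives the claim.

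I do not anticipate any real obstacle. Convergence of the integrals is guaranteed by the exponential decay of $\ln(1+e^{-2\pi\eta})$ as $\eta\to\infty$ together with the polynomial factor $\eta^{2k}$, and the positivity facts are immediate. The only point one must be slightly careful about is that the enveloping conclusion concerns the \emph{nonzero} terms of the series; but since each $\betahat_k$ is a positive multiple of $\beta_k$ by~\eqref{eq:betahat} and hence nonzero, this causes no difficulty.
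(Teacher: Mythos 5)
Your proposal is correct and follows essentially the same route as the paper: the paper's proof of Corollary~\ref{cor:cor5} simply reads off $\thetahat_k(z)\in(0,1)$ from~\eqref{eq:thetahat}, using exactly the positivity of $\ln(1+e^{-2\pi\eta})$ and the bound $z^2/(z^2+\eta^2)\in(0,1)$ that you spell out. Your additional remarks (that no $\coth$ rewriting is needed here, and that each $\betahat_k$ is nonzero so the enveloping convention is satisfied) are accurate elaborations of details the paper leaves implicit.
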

\begin{proof}
{From}~\eqref{eq:thetahat} we have $\thetahat_k(z) \in (0,1)$.
\end{proof}

\pagebreak[3]
\begin{remark}
{\rm
If we make the change of variables $z \mapsto n+\frac12$
in~\eqref{eq:lnGammahalf}, and assume that $n$ is a positive integer,
we obtain an asymptotic series for $n!$ in negative powers of $(n+\half)$:
\begin{equation}		\label{eq:deMoivre}
\ln n! \sim (n+\half)\ln(n+\half) - (n+\half) + \half\ln(2\pi)
	+ \sum_{j\ge 0}(-1)^{j+1}\,\frac{\betahat_j}{(n+\half)^{2j+1}}
	\,\raisedot
\end{equation}
In fact, \eqref{eq:deMoivre} was stated (without proof) by 
de Moivre~\cite{deMoivre2,deMoivre3} as early as 1730, 
see Dutka~\cite[(5), pg.~233]{Dutka}.
} 
\end{remark}

\section{Non-enveloping asymptotic series} \label{sec:non-env}

Lest the reader has gained the impression that all ``naturally occurring''
asymptotic series are enveloping (for real positive arguments), 
we give two classes of examples to show that this
is not the case. In fact, enveloping series are the exception, not the
rule. 
Our first class of examples is given by the following Lemma.
\begin{lemma}		\label{lem:fJ}
Let $x\in(0,+\infty)$ and  $f(x) := J(x) + \exp(-bx)$ 
for some constant $b \in (0, 2\pi)$.
Then $f(x)$ has an asymptotic series
\begin{equation}		\label{eq:f-asym}
f(x) \sim \sum_{j=0}^{\infty} (-1)^j\frac{\beta_j}{x^{2j+1}}\,\raisedot
\end{equation}
However, the series~\eqref{eq:f-asym} does not envelop $f$.
\end{lemma}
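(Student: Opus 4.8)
The plan is to verify the two assertions in turn: that the displayed power series really is the asymptotic expansion of $f$, and that it nevertheless fails the enveloping inequalities.

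The first part is immediate. By \eqref{eq:Binet-expansion} the series $\sum_{j\ge 0}(-1)^j\beta_j/x^{2j+1}$ is asymptotic to $J$, and since $b>0$ the added term satisfies $e^{-bx}=o(x^{-N})$ as $x\to+\infty$ for every $N$. Adding a function that is beyond all orders cannot change any coefficient of an asymptotic power series, so the same series is asymptotic to $f=J+e^{-bx}$, which is \eqref{eq:f-asym}.

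For the second part I would first restate what enveloping demands. Writing the remainder of the series for $f$ as
\[
\rho_k(x):=f(x)-\sum_{j=0}^{k-1}(-1)^j\frac{\beta_j}{x^{2j+1}}=r_k(x)+e^{-bx},
\]
the series envelops $f$ only if, for every $k$, $\rho_k(x)$ has the same sign as the first neglected term $(-1)^k\beta_k/x^{2k+1}$. I would exploit the odd values of $k$, for which this term is negative (recall $\beta_k>0$ from \eqref{eq:beta}). By \eqref{eq:r_k_theta} and \eqref{eq:enveloping} we have $r_k(x)=-\theta_k(x)\beta_k/x^{2k+1}$ with $\theta_k(x)\in(0,1)$, so $\rho_k(x)=e^{-bx}-\theta_k(x)\beta_k/x^{2k+1}$. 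Consequently, at any point where $x^{2k+1}e^{-bx}\ge\beta_k$ we get $\rho_k(x)>e^{-bx}-\beta_k/x^{2k+1}\ge0$, i.e.\ the remainder has the wrong sign, and the enveloping property fails for that $k$.

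It therefore suffices to find, for some odd $k$, a point $x$ with $x^{2k+1}e^{-bx}\ge\beta_k$. The maximum of $x\mapsto x^{2k+1}e^{-bx}$ over $(0,\infty)$ is attained at $x=(2k+1)/b$ and equals $\bigl((2k+1)/(eb)\bigr)^{2k+1}$, so the task reduces to the growth comparison $\bigl((2k+1)/(eb)\bigr)^{2k+1}>\beta_k$ for large $k$. This is the crux, and it is exactly here that the hypothesis $b<2\pi$ is used. Inserting the closed form $\beta_k=2(2k)!\,\zeta(2k+2)/(2\pi)^{2k+2}$ from \eqref{eq:betakzetaB} and applying Stirling's formula to $(2k)!$, a short computation of logarithms gives
\[
(2k+1)\ln\frac{2k+1}{eb}-\ln\beta_k=2k\ln\frac{2\pi}{b}+O(\ln k),
\]
which tends to $+\infty$ precisely because $b<2\pi$. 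Hence for all sufficiently large $k$ --- in particular for infinitely many odd $k$ --- the maximum exceeds $\beta_k$, and evaluating $\rho_k$ at $x=(2k+1)/b$ yields a point where the remainder has the wrong sign. A single such pair $(k,x)$ already shows the series does not envelop $f$. I expect the only thing needing care is keeping the Stirling estimate explicit enough to guarantee that the logarithmic difference genuinely diverges for every fixed $b\in(0,2\pi)$; the rest is routine.
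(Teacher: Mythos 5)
Your proof is correct, but it takes a genuinely different route from the paper's. The paper argues by contradiction and never examines signs: since the series is known to envelop $J$ itself, if it also enveloped $f$ then the triangle inequality would give $|f(x_k)-J(x_k)| = O(\beta_k/x_k^{2k+1})$ along the tuned sequence $x_k=\max(1,k/\pi)$, which Stirling makes $O(e^{-2\pi x_k})$, contradicting $f-J=e^{-bx}$ with $b<2\pi$. You instead dispense with the contradiction and produce explicit witnesses: using the strict bound $\theta_k(x)\in(0,1)$ from \eqref{eq:r_k_theta}--\eqref{eq:enveloping}, you compute the sign of the remainder $\rho_k(x)=r_k(x)+e^{-bx}$ for odd $k$ at the maximizing point $x=(2k+1)/b$ of $x^{2k+1}e^{-bx}$, and show it is opposite to that of the first neglected term once $\bigl((2k+1)/(eb)\bigr)^{2k+1}>\beta_k$; your Stirling computation correctly reduces this to $2k\ln(2\pi/b)+O(\ln k)\to+\infty$, i.e.\ exactly to the hypothesis $b<2\pi$. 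Both proofs run on the same quantitative engine --- comparing $e^{-bx}$ against the achievable term size $\approx e^{-2\pi x}$, with the threshold $2\pi$ coming from $\beta_k\approx 2(2k)!/(2\pi)^{2k+2}$ --- but they tune the parameters dually: the paper chooses $x_k$ as a function of $k$ to make the term exponentially small, while you choose $x$ as a function of $k$ to make $e^{-bx}$ dominate the term. The paper's version buys brevity and robustness (no sign bookkeeping, no parity restriction, no need for strictness of $\theta_k<1$); yours buys a constructive conclusion, exhibiting concrete pairs $(k,x)$ where envelopment visibly fails, and showing the failure is one of sign rather than merely of magnitude. Your closing worry about the Stirling step is unfounded: the error in $\ln(2k)!$ is $O(\ln k)$ and $\zeta(2k+2)\to 1$, so the divergence of $2k\ln(2\pi/b)$ holds for every fixed $b\in(0,2\pi)$; note also that at your chosen point the inequality is strict for large $k$, so even $\theta_k\le 1$ would suffice.
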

\begin{proof}
For all $k \ge 0$,
$\exp(-bx) = O(x^{-2k-1})$ as $x \to +\infty$.
Thus, it follows from~\eqref{eq:Binet-expansion-remainder} that
$f(x)$ has the claimed asymptotic series (in fact the same series
as the Binet function $J$.)
This proves the first claim.

To prove the final claim,
suppose, by way of contradiction, that
the series~\eqref{eq:f-asym} envelops $f$.
For each integer $k > 0$,
define $x_k := k/\pi$.
{From} \eqref{eq:betakzetaB}, the $\beta_k$ grow like
$(2k)!/(2\pi)^{2k}$, 
and from Stirling's approximation we see that
\begin{equation}		\label{eq:constraint}
\beta_k/x_k^{2k+1} = O(\exp(-2\pi x_k))
 \;\text{ as }\; k \to \infty. 
\end{equation}
Since the same series envelops both $f$ and $J$,
\eqref{eq:constraint} implies that
\[
|f(x_k)-J(x_k)| = O(\exp(-2\pi x_k))
 \;\text{ as }\; k \to \infty.
\]
Since $\exp(-2\pi x) = o(\exp(-bx))$, it follows that,
for sufficiently large $k$,
\[
|f(x_k)-J(x_k)| < \exp(-bx_k).
\]
This contradicts the definition of $f$,
so the assumption that the series~\eqref{eq:f-asym} envelops $f$
must be false.
\end{proof}
\begin{remark}
{\rm
Lemma~\ref{lem:fJ} can be generalised.  For example, the
conclusion holds if $f(x) = J(x) + g(x)$,
where $g(x) = O(x^{-k})$ for all positive integers $k$, 
but $g(x) \ne O(\exp(-2\pi x))$.
Also, we can replace the function $J(x)$ by a different function
that has an enveloping asymptotic series whose terms grow at
the same rate as those of $J(x)$.
} 
\end{remark}

Our second class of examples involves asymptotic expansions where all 
(or all but a finite number) of the terms are of the same sign
(assuming a positive real argument $x$).
Such series can not be strictly enveloping~\cite[Ch.~4]{PS-v1}.
As examples, we mention the Bessel function $I_0(x)$
(see Olver and Maximon~\cite[\S10.40.1]{OM}),
the product of two Bessel functions $I_0(x)K_0(x)$
(see \cite[\S10.40.6]{OM} and \hbox{\cite[Lemma~3.1]{rpb256}}),
and the {Riemann-Siegel theta function} (see~\cite[\S6.5]{Edwards}).
In all these examples the terms have constant sign, so the remainder changes
monotonically as the number of terms increases with the argument $x$ fixed.
Eventually the remainder changes sign and starts increasing in absolute
value. 
Often the point where the remainder changes sign
is close to where the terms are smallest in absolute value,
but this is not always true~--
see for example~\cite[\S\S4--5]{rpb268}.

\pagebreak[3]
\section{Concluding remarks}	\label{sec:conc}

We have considered three different but related asymptotic series
that can all be proved to be strictly enveloping.  Our proofs depend on the
fact that the three relevant functions
$-\ln(1-e^{-2\pi\eta})$, $\ln\coth(\pi\eta)$,
and $\ln(1+e^{-2\pi\eta})$ are positive for all $\eta\in(0,\infty)$.
We remark that these three functions are linearly dependent,
since
\[
\coth(\pi\eta) = \frac{1+e^{-2\pi\eta}}{1-e^{-2\pi\eta}}\,\raisedot
\]
It follows that the sequences $(\beta_k)_{k\ge 0}$, 
$(\betatilde_k)_{k\ge 0}$ and $(\betahat_k)_{k\ge 0}$
are linearly dependent. In fact,
$\betatilde_k = \beta_k + \betahat_k$ for all $k\ge 0$.
A table of numerical values is given in the Appendix.\\

\noindent\textbf{Acknowledgement:}
We thank an anonymous referee
for simplifying the proof of Lemma~\ref{lem:fJ} and 
for noting that the domain of validity of~\eqref{eq:Binet}
is the right half-plane $\Re z > 0$
(although the Binet function $J(x)$ 
may be continued into the left half-plane by 
analytic continuation, see~\cite[Thm.~8.5a]{Henrici-v2}).\\

\noindent\textbf{Conflicts of Interest:} The author declares 
no conflict of interest.

\pagebreak[3]

\pagebreak[3]
\section*{Appendix: numerical values of the coefficients}

The table below gives the exact values of the coefficients $\beta_k$,
$\betatilde_k$ and $\betahat_k$ for $0 \le k \le 6$.
The values have been computed from equations 
\eqref{eq:betakzetaB}, \eqref{eq:betatilde} and \eqref{eq:betahat}.
We recall from the discussion above that the 
coefficients occur in the asymptotic expansions
\begin{align*}	
\ln\Gamma(z) \sim&\; (z-\half)\ln z - z + \half\ln(2\pi)
	+ \frac{\beta_0}{z} - \frac{\beta_1}{z^3}
	+ \frac{\beta_2}{z^5} - \cdots,\\
\ln\binom{2n}{n} \sim&\; \ln\left(\frac{4^n}{\sqrt{\pi n}}\right)
	- \frac{\betatilde_0}{n} + \frac{\betatilde_1}{n^3}
	- \frac{\betatilde_2}{n^5} + \cdots,\;\text{ and}\\
\ln\Gamma(z+\half) \sim&\; z\ln z - z + \half\ln(2\pi)
        - \frac{\betahat_0}{z} + \frac{\betahat_1}{z^3}
        - \frac{\betahat_2}{z^5} + \cdots,
\end{align*}
the $\betahat_k$ also occurring in de Moivre's
series~\eqref{eq:deMoivre} and, with a different sign pattern,
in a series
related to the Riemann-Siegel theta function
\cite[eqn.~(2)]{rpb268}:
$2\vartheta(t) \sim t\ln(t/2\pi e) - \pi/4 + \betahat_0/t
 + \betahat_1/t^3 + \cdots$.
In all but the Riemann-Siegel theta function case 
the asymptotic series are strictly enveloping, so the error
incurred in truncating the series can be bounded by the first
term omitted, provided
that $z\in(0,\infty)$ is real and that $n$ is a positive integer.
For error bounds if $z$ is complex,
we refer to~\cite[\S\S2--3]{rpb268}.

\begin{table}[ht]
\begin{center}
\begin{tabular}{cccc}
$k$ & $\beta_k$ & $\betatilde_k$ & $\betahat_k$\\
\hline
&&\\[-9pt]
$0$& $1/12$ & $1/8$ & $1/24$\\[1pt]
$1$& $1/360$ & $1/192$ & $7/2880$\\[1pt]
$2$& $1/1260$& $1/640$ & $31/40320$\\[1pt]
$3$& $1/1680$& $17/14336$ & $127/215040$\\[1pt]
$4$& $1/1188$& $31/18432$ & $511/608256$\\[1pt]
$5$& $691/360360$& $691/180224$ & $1414477/738017280$\\[1pt]
$6$& $1/156$& $5461/425984$ & $8191/1277952$\\[1pt]
\hline
\end{tabular}
\end{center}
\end{table}

\begin{remark}
{\rm
We note that the sequence $((-1)^k\beta_k)_{k\ge 0}$ is in the Online
Encyclopedia of Integer Sequences (OEIS)~\cite{OEIS}.  
The (signed) numerators are 
sequence A046968, and the denominators are sequence A046969.  The sequence
$(\betahat_k/2)_{k\ge 0}$ is also in the OEIS: the numerators are
sequence A036282, and the denominators are sequence A114721.
We have added the sequence $((-1)^k\betatilde_k)_{k\ge 0}$ to the OEIS.
The (signed) numerators are sequence A275994, and the denominators
are sequence A275995.  
Other relevant sequences are A143503
and A061549.
} 
\end{remark}

\end{document}